\UseRawInputEncoding
\documentclass[review,hidelinks,onefignum,onetabnum]{siamart250106}

\usepackage{lipsum}
\usepackage{amsfonts}
\usepackage{graphicx}
\usepackage{epstopdf}
\usepackage{algorithmic}
\ifpdf
  \DeclareGraphicsExtensions{.eps,.pdf,.png,.jpg}
\else
  \DeclareGraphicsExtensions{.eps}
\fi

\newsiamremark{remark}{Remark}
\newsiamremark{hypothesis}{Hypothesis}
\crefname{hypothesis}{Hypothesis}{Hypotheses}
\newsiamthm{claim}{Claim}

\headers{Scalable Fixed-Point Framework for HJ PDEs}{Y. Park and S. Osher}

\title{Scalable Fixed-Point Framework for High-Dimensional Hamilton–Jacobi Equations\thanks{Submitted to the editors DATE 
\funding{S. Osher and Y. Park were supported by DARPA HR00112590074 and DoE DE-SC0026262.
S. Osher was also supported by NSF 2208272.}}}

\author{Yesom Park\thanks{Department of Mathematics, University of California, Los Angeles 
  (\email{yeisom@math.ucla.edu}).}
\and Stanley Osher\thanks{Department of Mathematics, University of California, Los Angeles
  (\email{sjo@math.ucla.edu}).}}

\usepackage{amsopn}

\ifpdf
\hypersetup{
  pdftitle={Neural Implicit Solution Formula for HJ PDEs},
  pdfauthor={Y. Park and S. Osher}
}
\fi

\usepackage{subfigure}
\usepackage{booktabs}

\usepackage{empheq}

\usepackage{multirow}

\newcommand{\cU}{\mathcal{U}}

\newcommand{\bR}{\mathbb{R}}

\newcommand{\dV}[1][u]{\cU\left(\Omega;\bR^{d_v}\right)}
\newcommand{\dVt}[1][u]{\cU\left(\left[0,\infty\right)\times\Omega;\bR^{d_v}\right)}

\newcommand{\bx}{\mathbf{x}}

\begin{document}

\maketitle
\begin{abstract}
We propose a novel, mesh--free, and gradient--free fixed--point approach for computing viscosity solutions of high-dimensional Hamilton--Jacobi (HJ) equations. By leveraging the Hopf--Lax formula, our approach iteratively solves the associated variational problem via a Picard iteration, enabling efficient evaluation of both the solution and its corresponding control without relying on grids, characteristics, or differentiation.
We demonstrate the practical efficacy and scalability of the approach through numerical experiments in up to 100 dimensions, including control problems and non-smooth solutions. Our results show that the proposed scheme achieves high accuracy, is highly efficient, and exhibits computational times that are largely independent of dimensionality, highlighting its suitability for high-dimensional problems.
\end{abstract}

\begin{keywords}
Hamilton--Jacobi equations, Fixed-point iteration, High-dimensional problems, Hopf--Lax formula
\end{keywords}

\begin{MSCcodes}
65N99, 65K10, 65N12, 93B40
\end{MSCcodes}

\section{Introduction}


Hamilton--Jacobi partial differential equations (HJ equations) play a central role in numerous areas of mathematics, physics, and engineering, including optimal control \cite{evans1984differential, sideris2005efficient, bansal2017hamilton}, mechanics \cite{denman1973solution, de2008linear}, and the study of dynamic systems \cite{khanin2010particle, rajeev2008hamilton}. They provide a rigorous framework for modeling systems governed by physical laws and have found applications in diverse domains such as geometric optics \cite{osher1993level, minano2006hamilton}, computer vision \cite{caselles1997geodesic, gilboa2009nonlocal, osher2007geometric}, robotics \cite{lin1998optimal, lewis2003robot, bansaL2020hamilton}, trajectory optimization \cite{delahaye2014mathematical, parzani2018hamilton}, traffic flow modeling \cite{imbert2013hamilton, lavaL2013hamilton}, and financial strategies \cite{forsyth2007numerical, cao2009optimal}. Many of these applications naturally lead to \emph{high-dimensional} HJ equations, where the state space involves multiple degrees of freedom or coupled system variables. The inherent high dimensionality of these problems poses a severe challenge for classical numerical methods.

Traditional grid-based methods, including essentially non-oscillatory (ENO) and weighted ENO (WENO) schemes \cite{osher1991high, jiang2000weighted, bryson2003high, qiu2005hermite}, semi-Lagrangian methods \cite{falcone2002semi, cristiani2007fast, falcone2013semi}, and level set approaches \cite{osher1988fronts, osher1993level, osher2004level, mitchell2004computing, ansari2013application}, achieve high accuracy in low-dimensional settings but rely heavily on discretizing the state space. As the dimensionality increases, the number of grid points required for accurate approximations grows exponentially, leading to prohibitive computational costs. Consequently, these methods become infeasible for many high-dimensional control problems, where the state space can easily exceed ten dimensions. 
 To address this \emph{curse of dimensionality}, alternative approaches have been proposed, including max-plus algebra-based methods \cite{mceneaney2006max, akian2008max, fleming2000max} and Hopf--Lax formula-based methods \cite{darbon2016algorithms, chow2017algorithm, chow2019algorithm, chen2024hopf}, which avoid explicit grid discretization and allow parallel computation by solving optimization problems at each point, though they are generally limited to specific problem classes and often require convexity or other structural assumptions.
 
Recent advances in neural network-based approaches \cite{sirignano2018dgm, yu2018deep, raissi2019physics, lu2019deeponet, li2020fourier, yang2023context} have leveraged neural networks to store solution information in their parameters, offering scalable representations that can partially mitigate the dimensionality challenge inherent to high-dimensional HJ equations. In particular, the development of specialized neural network architectures tailored to express variational or representation formulas for specific classes of HJ equations has shown promising results \cite{darbon2020overcoming, darbon2021some, darbon2023neural}. Complementarily, a new method based on characteristics has been proposed to efficiently compute viscosity solutions via an implicit formula \cite{park2025implicit}.  
Despite these advancements, convergence and error guarantees of these approaches are often absent or heuristic, which limits the reliability of the computed solutions. Moreover, the repeated computation of derivatives through automatic differentiation for backpropagation incurs substantial computational and memory costs as the problem dimension increases. These factors constrain the practical scalability and efficiency of neural solvers for extremely high-dimensional HJ equations, particularly in real-world applications where both accuracy and computational tractability are critical.

\begin{table}[t]
\centering
\caption{Comparison of computational paradigms for solving Hamilton--Jacobi equations. 
The proposed fixed-point method performs \emph{direct local evaluation}, 
deep learning methods represent a \emph{global approximation} requiring prior training, 
and WENO is an \emph{evolutionary solver} advancing solutions causally in time.}
\vspace{4pt}
\resizebox{\textwidth}{!}{%
\begin{tabular}{lccccc}
\toprule
\textbf{Method} & \textbf{Computation Type} & \textbf{Precomputation} & \textbf{Evaluation Cost} & \textbf{Scalability} & \textbf{Convergence}\\
\midrule
\textbf{Grid-based methods} & Evolutionary & Grid generation  & Exponential in grid size & Exponential in $d$ & Guaranteed\\
\textbf{Deep learning methods} & Global approximation & Heavy training & Low (forward network pass) & Quadratic w.r.t.\ $d$ & No \\
\textbf{Fixed-point (Ours)} & Local, mesh-free & None & Linear in iteration count & Nearly $d$-independent & Guaranteed \\
\bottomrule
\end{tabular}}
\label{tab:method_comparison}
\end{table}

In this work, we introduce a novel \emph{fixed-point iteration framework} for solving high-dimensional HJ equations via the Hopf--Lax formula. Unlike traditional grid-based or neural network-based methods, our approach is \emph{mesh-free}, \emph{causality-free}, and \emph{derivative-free}, making it particularly suitable for large state dimensions. By reformulating the Hopf--Lax formula as a fixed-point problem, we obtain a simple iterative scheme that converges to the unique global minimizer, enabling direct computation of the viscosity solution and associated optimal controls without spatial discretization or repeated differentiation.  
Importantly, the algorithm is straightforward to implement, requiring only basic iterative updates, which makes it highly accessible for practical use.

We also establish rigorous convergence and error bounds under standard convexity and Lipschitz continuity assumptions, covering a wide range of practical optimal control and high-dimensional Hamilton--Jacobi problems. Within this framework, the method achieves unprecedented scalability while avoiding grids, characteristic tracing, and derivative evaluations. Moreover, in scenarios where overlapping characteristics produce kinks and the fixed point is no longer unique, we propose a randomly sampled \emph{multiple initialization} strategy to explore all fixed points and reliably identify the one corresponding to the viscosity solution. Numerical experiments in up to 100 dimensions, including non-smooth and control problems, demonstrate high accuracy, efficiency, and near dimension-independent computational cost.  

Table~\ref{tab:method_comparison} highlights the distinctions between our fixed-point approach, deep learning methods, and classical grid-based solvers. In contrast to deep learning methods, which require heavy global training, and grid-based solvers, which are evolutionary and causal from $t=0$, the fixed-point method supports \emph{direct local evaluation} at any point $(x,t)$, with guaranteed convergence and nearly dimension-independent cost.  
Overall, our framework complements the limitations of existing classical numerical schemes and modern deep learning approaches: it combines \emph{dimension-robust scalability} with \emph{provable accuracy}, providing a theoretically sound and practically efficient methodology for high-dimensional HJ equations.


\section{Method}
\subsection{Fixed-Point Representation via the Hopf--Lax Formula}
Consider the Hamilton--Jacobi (HJ) equation
\begin{equation}\label{eq:hj}
\begin{cases}
    u_t + H(\nabla u) = 0,\\
    u( x ,0) = g( x ),
\end{cases}
\end{equation}
where \(H:\mathbb{R}^d\rightarrow\mathbb{R}\) is a convex (or concave) Hamiltonian and \(g:\mathbb{R}^d\rightarrow\mathbb{R}\) is a Lipschitz continuous initial condition. It is well-known that, under suitable regularity assumptions, the unique viscosity solution of \eqref{eq:hj} can be expressed via the Hopf--Lax formula:
\begin{equation}\label{eq:hopf_lax}
    u(x,t) = \inf_{y \in \mathbb{R}^n} \Big\{ t\,H^\ast\Big(\frac{x-y}{t}\Big) + g(y) \Big\},
\end{equation}
where \(H^\ast\) denotes the Legendre transform of \(H\).  

While the Hopf--Lax formula provides a variational characterization of the solution, its practical evaluation requires computing the global minimizer \(y^*\). By formally differentiating the variational expression, we arrive at the following fixed-point relation for the minimizer:
\begin{equation}\label{eq:fixed_pts}
    y = x - t\, \nabla H(\nabla g(y)),
\end{equation}
which naturally motivates a Picard fixed-point iteration for approximating $y^*$. Specifically, the iterative procedure
\begin{equation}\label{eq:fp_iter}
    y^{(k+1)} = x - t\, \nabla H(\nabla g(y^{(k)})), \quad k = 0,1,2,\dots
\end{equation}
can be implemented in practice with a prescribed maximum number of iterations and convergence tolerance. The detailed algorithmic implementation is summarized in Algorithm~\ref{alg:fixed_point}. 
This fixed-point perspective establishes a bridge between the variational formulation of HJ equations and efficient computational realization, enabling scalable evaluation even in high-dimensional settings such as optimal control.

\begin{algorithm}[H]
\caption{Fixed-Point Iteration for Hamilton--Jacobi Equation}
\label{alg:fixed_point}
\begin{algorithmic}[1]
\REQUIRE $x \in \mathbb{R}^d$, $t>0$, $g:\mathbb{R}^d \to \mathbb{R}$, $H:\mathbb{R}^d \to \mathbb{R}$, 
$K_{\max} \in \mathbb{N}$ (maximum iterations), $\varepsilon > 0$ (tolerance)
\ENSURE $y^* \in \mathbb{R}^d$, approximate minimizer of the Hopf--Lax formula

\STATE Initialize $y^{(0)} \gets x$
\FOR{$k = 0$ \textbf{to} $K_{\max}-1$}
    \STATE $y^{(k+1)} \gets x - t \, \nabla H(\nabla g(y^{(k)}))$
    \IF{$\|y^{(k+1)} - y^{(k)}\|_2 < \varepsilon$}
        \STATE \textbf{break}
    \ENDIF
\ENDFOR
\STATE $y^* \gets y^{(k+1)}$
\RETURN $y^*$
\end{algorithmic}
\end{algorithm}

\paragraph{Control-theoretic interpretation.}
The fixed-point relation \eqref{eq:fixed_pts} admits a natural interpretation from the viewpoint of optimal control.  
Recall that the Hamilton--Jacobi equation can be equivalently expressed as
\[
u_t + H(\nabla u) = u_t + \sup_{q}\{\nabla u^\top q - L(q)\} = 0,
\]
where \(L\) is the Legendre transform of \(H\).  
The corresponding value function of the optimal control problem is given by
\begin{equation}\label{eq:oc_value_ft}
    u( x ,t) = \inf_{q(\cdot)} \Bigg\{ \int_0^t L(q(s))\,\mathrm{d}s + g( y (0))
    :  y (t)= x ,\; \dot{ y }(s)=q(s) \Bigg\}.
\end{equation}
Under standard convexity and differentiability assumptions, the minimizer \(y^\ast\) obtained from the fixed-point iteration corresponds to the optimal initial state \( y (0)\) in \eqref{eq:oc_value_ft}, and the associated optimal control is given by
\begin{equation}\label{eq:control_from_y}
q^\ast = \frac{x - y^\ast}{t}.
\end{equation}
Hence, the proposed fixed-point scheme not only enables efficient computation of the viscosity solution \(u(x,t)\), but also provides direct access to the corresponding optimal control, without requiring characteristic tracing or grid-based discretization.

\paragraph{Advantages.}
Compared to traditional mesh-based numerical solvers (e.g., WENO schemes) or recent deep learning-based approaches, the proposed fixed-point method achieves significantly faster evaluation while maintaining high accuracy. 
Its key advantages include:
\begin{itemize}
    \item \textbf{Mesh-Free and Scalable:} No spatial discretization or characteristic tracing is required, making the method well-suited for high-dimensional problems.
    \item \textbf{Derivative-Free:} The approach avoids explicit derivative computations, resulting in low computational overhead and excellent scalability.
    \item \textbf{Casuality-free and Highly Parallelizable:}
    The fixed-point updates do not rely on causality from neighboring grid points, resulting in a causality-free scheme that enables embarrassingly parallel computation across all spatial locations and significantly reduces wall-clock time on modern hardware.
    \item \textbf{Easy Implementation:} The algorithm consists of simple iterative updates, making it straightforward to implement and highly accessible for practical use.
    \item \textbf{Highly Fast in High Dimensions:} The pointwise fixed-point evaluation enables substantial computational speedups and maintaining efficiency even in very high-dimensional settings.
    \item \textbf{Control Recovery:} The optimal control associated with the Hamilton--Jacobi solution is directly obtained from the fixed-point minimizer \eqref{eq:control_from_y}.
    \item \textbf{Wide applicability:} The scheme accommodates a broad class of convex Hamiltonians and convex, Lipschitz-differentiable initial data, and can be extended to certain nonsmooth settings.
\end{itemize}

\subsection{Convergence Analysis}
In this section, we establish rigorous convergence, error, and complexity guarantees for the proposed fixed-point iteration scheme~\eqref{eq:fp_iter}.
We derive quantitative bounds on both the minimizer error and the approximation error in the viscosity solution, and further analyze the iteration complexity required to achieve a prescribed accuracy.
Together, these results provide a comprehensive theoretical foundation for the efficiency, stability, and accuracy of the proposed method.

We begin by presenting sufficient conditions that ensure the existence and uniqueness of the fixed point, as well as the global convergence of the iteration. Throughout this section, $\|\cdot\|$ denotes the $L^2$ norm.

\begin{theorem}[Fixed-point convergence]\label{thm:fixed_pt}
Let \( H: \mathbb{R}^n \to \mathbb{R} \) and \( g: \mathbb{R}^n \to \mathbb{R} \) be continuously differentiable functions such that:
\begin{itemize}
    \item \( H \) is \emph{strictly convex} and has \( L_H \)-Lipschitz continuous gradient:
    \[
    \|\nabla H(p_1) - \nabla H(p_2)\| \leq L_H \|p_1 - p_2\|, \quad \forall p_1, p_2 \in \mathbb{R}^n.
    \]
    \item \( g \) is \emph{convex} and has \( L_g \)-Lipschitz continuous gradient:
    \[
    \|\nabla g(y_1) - \nabla g(y_2)\| \leq L_g \|y_1 - y_2\|, \quad \forall y_1, y_2 \in \mathbb{R}^n.
    \]
\end{itemize}
Define the fixed-point operator
\[
    F(y) := x - t\, \nabla H(\nabla g(y)).
\]
Then, for any \(x \in \mathbb{R}^n\) and any time step \(t>0\) satisfying
\[
    t\, L_H\, L_g < 1,
\]
the iteration
\[
    y^{(k+1)} = F(y^{(k)}), \qquad k = 0,1,2,\dots
\]
converges to the unique global minimizer of the Hopf--Lax formula \eqref{eq:hopf_lax}.

\end{theorem}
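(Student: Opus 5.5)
The plan has two halves: first show that $F$ is a contraction, so Banach's fixed-point theorem applies; then identify its unique fixed point with the unique global minimizer of the Hopf--Lax objective $\Phi(y) := t H^\ast\!\left(\frac{x-y}{t}\right) + g(y)$.

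\textbf{Step 1: $F$ is a contraction.} For any $y_1,y_2\in\mathbb{R}^n$, composing the two Lipschitz bounds gives
\[
\|F(y_1)-F(y_2)\| = t\,\|\nabla H(\nabla g(y_1)) - \nabla H(\nabla g(y_2))\| \le t L_H \|\nabla g(y_1)-\nabla g(y_2)\| \le t L_H L_g \|y_1-y_2\|.
\]
Since $tL_HL_g<1$, $F$ is a contraction on the complete space $\mathbb{R}^n$. Banach's theorem then gives a unique fixed point $\bar y$. It also gives convergence $y^{(k)}\to\bar y$ from any initialization, in particular $y^{(0)}=x$, with the geometric rate $\|y^{(k)}-\bar y\|\le (tL_HL_g)^k\|y^{(0)}-\bar y\|$.

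\textbf{Step 2: the fixed point minimizes $\Phi$.} Let $p:=\nabla g(\bar y)$ and $q:=\nabla H(p)$. Because $H$ is convex and $C^1$, Fenchel--Young duality gives $q\in\partial H^\ast(p)$ and hence $p\in\partial H^\ast(q)$. The fixed-point relation $\bar y = x - tq$ says exactly $\frac{x-\bar y}{t}=q$, so $\nabla g(\bar y)=p\in \partial H^\ast\!\left(\frac{x-\bar y}{t}\right)$.

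Now compute the subdifferential of the first term. For the convex function $y\mapsto tH^\ast\!\left(\frac{x-y}{t}\right)$, the chain rule for a convex function composed with an affine map gives
\[
\partial_y\, tH^\ast\!\left(\tfrac{x-y}{t}\right) = -\,\partial H^\ast\!\left(\tfrac{x-y}{t}\right).
\]
Since $g$ is differentiable and convex, the sum rule applies, and therefore
\[
0 \in -\partial H^\ast\!\left(\tfrac{x-\bar y}{t}\right) + \nabla g(\bar y) = \partial \Phi(\bar y).
\]
Here $\Phi$ is convex as the sum of convex functions, possibly taking the value $+\infty$ since $H^\ast$ may be extended-valued. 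So $0\in\partial\Phi(\bar y)$ makes $\bar y$ a global minimizer of $\Phi$.

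\textbf{Step 3: uniqueness of the minimizer.} Every minimizer $y^\ast$ satisfies $0\in\partial\Phi(y^\ast)$, so there is $p^\ast=\nabla g(y^\ast)\in\partial H^\ast\!\left(\frac{x-y^\ast}{t}\right)$. Inverting the duality gives $\frac{x-y^\ast}{t}\in\partial H(p^\ast)=\{\nabla H(p^\ast)\}$, which means $y^\ast$ is a fixed point of $F$. Uniqueness of the fixed point from Step 1 then forces $y^\ast=\bar y$. This step needs the optimality condition to be exactly $0\in\partial\Phi(y^\ast)$, which is where the sum rule for subdifferentials enters. Because $g$ is finite and differentiable everywhere, the sum rule holds without any extra qualification condition.

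\textbf{Main obstacle.} The delicate step is the convex-analytic part of Steps 2--3. I need $H^\ast$ to be proper, convex and lower semicontinuous so that $\partial H^\ast$ and $\partial H$ are inverse to each other, and that follows because $H$ is finite and convex. I also need a minimizer to exist, or at least I must not assume one in advance. Step 2 avoids the existence issue by producing the minimizer $\bar y$ directly. Strict convexity of $H$ is not actually needed for this argument: uniqueness comes from the contraction property. I would remark on this rather than rely on it, but strict convexity does make $H^\ast$ differentiable on the interior of its domain, and then the inclusions above become equalities.
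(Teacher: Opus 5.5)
Your proof is correct. Step 1 is exactly the paper's argument, but you identify the fixed point with the minimizer by a different route.

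The paper proceeds as follows. It asserts that $\Phi$ is differentiable and writes the critical-point equation $\nabla g(y)=\nabla H^\ast((x-y)/t)$. It converts this into the fixed-point equation via $\nabla H^\ast=(\nabla H)^{-1}$. It then gets uniqueness of the minimizer from strict convexity of $\Phi$, which it attributes to strict convexity of $H^\ast$ ``as $H$ is strictly convex.''

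You instead work with the inclusion $\nabla g(y)\in\partial H^\ast((x-y)/t)$. Your tools are $\partial H^\ast=(\partial H)^{-1}$, the chain rule for an invertible affine map, and the Moreau--Rockafellar sum rule, whose qualification condition holds automatically since $g$ is finite and continuous. Uniqueness of the minimizer then comes for free from uniqueness of the Banach fixed point.

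Your route is the more robust one. It uses neither strict convexity of $H$ nor differentiability of $\Phi$, and the latter can genuinely fail under the stated hypotheses. For example, $H(p)=\sqrt{1+\|p\|^2}$ is strictly convex with $1$-Lipschitz gradient. Yet $H^\ast(q)=-\sqrt{1-\|q\|^2}$ is finite only on the closed unit ball. So $\Phi(y)=+\infty$ for $\|x-y\|>t$, and $\Phi$ is not differentiable on $\|x-y\|=t$. The paper's argument therefore needs a repair: note that $\partial H^\ast$ is nonempty only on the interior of $\mathrm{dom}\,H^\ast$, and restrict attention there. Your subdifferential formulation handles this automatically.

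The paper's route, once made precise, is shorter. It also gives uniqueness of the minimizer from the structure of $\Phi$ alone, independently of $tL_HL_g<1$. However, what actually makes $H^\ast$ strictly (indeed $1/L_H$-strongly) convex is the Lipschitz continuity of $\nabla H$. Strict convexity of $H$ is the dual property, responsible for differentiability of $H^\ast$ on the interior of its domain.

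One small slip: in Step 2 you wrote $q\in\partial H^\ast(p)$ where you mean $q\in\partial H(p)$. The conclusion $p\in\partial H^\ast(q)$ that you draw from it is correct.
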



\begin{proof}
We aim to show that \( F(y) = x - t \nabla H(\nabla g(y)) \) is a contraction mapping.

Let \( y_1, y_2 \in \mathbb{R}^n \). Then:
\begin{align*}
\|F(y_1) - F(y_2)\| 
&= \left\| x - t \nabla H(\nabla g(y_1)) - \left( x - t \nabla H(\nabla g(y_2)) \right) \right\| \\
&= t \left\| \nabla H(\nabla g(y_1)) - \nabla H(\nabla g(y_2)) \right\| \\
&\leq t\, L_H \, \|\nabla g(y_1) - \nabla g(y_2)\| \\
&\leq t\, L_H\, L_g\, \|y_1 - y_2\|.
\end{align*}
The first inequality follows from the \(L_H\)-Lipschitz continuity of \(\nabla H\),
and the second from the \(L_g\)-Lipschitz continuity of \(\nabla g\).

Define \( \lambda := t \cdot L_H \cdot L_g \). Then:
\[
\|F(y_1) - F(y_2)\| \leq \lambda \|y_1 - y_2\|.
\]

Since \( \lambda < 1 \) by assumption, \( F \) is a contraction on the complete metric space \( \mathbb{R}^n \). By the Banach fixed-point theorem, there exists a unique fixed point \( y^* \in \mathbb{R}^n \) such that
\[
y^* = F(y^*) = x - t \nabla H(\nabla g(y^*)).
\]


We now show that this fixed point \( y^* \) is the unique minimizer of the Hopf--Lax function
\[
\phi(y) := t H^*\left( \frac{x - y}{t} \right) + g(y).
\]

Since \( H^* \) is the convex conjugate of a strictly convex and differentiable function \( H \), the Hopf-Lax function is differentiable.
Therefore, the critical point condition for \( \phi \) is:
\[
0 \in \nabla \phi(y) = - \nabla H^*\left( \frac{x - y}{t} \right) + \nabla g(y).
\]
Equivalently,
\[
\nabla g(y) = \nabla H^*\left( \frac{x - y}{t} \right)
\quad \Longleftrightarrow \quad y = x - t \nabla H(\nabla g(y)),
\]
where we used the identity \( \nabla H^* = (\nabla H)^{-1} \) under strict convexity.
Thus, any solution to the fixed-point equation is a critical point of \( \phi \), and vice versa.

Since \( H^* \) is strictly convex (as \( H \) is strictly convex), and \( g \) is convex, their sum \( \phi(y) \) is strictly convex. Hence, \( \phi \) has a unique critical point, which must be the unique global minimizer of \( \phi \).

Therefore, the fixed point \( y^* \) is the unique minimizer of the Hopf--Lax functional:
\[
u(x,t) = \min_y \left\{ t H^*\left( \frac{x - y}{t} \right) + g(y) \right\} = \phi(y^*).
\]
\end{proof}

By establishing the convergence of the fixed-point iteration, Theorem~\ref{thm:fixed_pt} provides a theoretical foundation for a \emph{gradient-free, mesh-free, and causality-free} approach to evaluating viscosity solutions of high-dimensional Hamilton--Jacobi equations.

We now proceed to quantify the relationship between the residual error and the resulting bounds on both the minimizer and the Hopf--Lax functional value, thereby providing explicit estimates for the approximation accuracy of the method.

\begin{theorem}[Error Analysis]\label{thm:residual_error}
Let \(H\) and \(g\) satisfy the assumptions of Theorem~\ref{thm:fixed_pt}, and let \(t>0\) satisfy \(L_F := t L_H L_g < 1\).  
Consider the iteration \(y^{(k+1)} = F(y^{(k)})\) with \(F(y) = x - t\,\nabla H(\nabla g(y))\),
and denote the minimizer by \(y^\ast\). Define the residual \(r^{(k)} := y^{(k+1)} - y^{(k)}\).
Then:
\begin{enumerate}
    \item (\textbf{Hopf--Lax Minimizer error})
    \[
    \|y^{(k)} - y^\ast\| \le \frac{1}{1 - L_F} \|r^{(k)}\|.
    \]
    \item (\textbf{Solution error})  
    If \(H^\ast\) is \(L_{H^\ast}\)-Lipschitz on the relevant domain, then
    \[
    |u(x,t) - u^{(k)}(x,t)| \le \frac{L_{H^\ast}/t + L_g}{1 - L_F}\, \|r^{(k)}\|,
    \]
    where \(u^{(k)}(x,t) := t\, H^\ast\!\big(\frac{x - y^{(k)}}{t}\big) + g(y^{(k)})\).
    \item (\textbf{Solution gradient error})  
    \[
    \|\nabla u(x,t) - \nabla u^{(k)}(x,t)\| 
\le L_g\| r^{(k)} \|.
    \]
\end{enumerate}
\end{theorem}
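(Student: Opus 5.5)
The plan is to run everything through the contraction estimate established in the proof of Theorem~\ref{thm:fixed_pt}, namely $\|F(y_1)-F(y_2)\|\le L_F\|y_1-y_2\|$ with $L_F<1$, together with the fixed-point identity $y^\ast=F(y^\ast)$.

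For part 1 (minimizer error) I will use the standard a posteriori Banach estimate. Splitting with the triangle inequality gives
\[
\|y^{(k)}-y^\ast\|\le \|y^{(k)}-y^{(k+1)}\|+\|F(y^{(k)})-F(y^\ast)\|\le \|r^{(k)}\|+L_F\|y^{(k)}-y^\ast\|.
\]
Rearranging, which is legitimate because $1-L_F>0$, yields the claim directly.

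For part 2 (solution error) I will write $u(x,t)-u^{(k)}(x,t)=\phi(y^\ast)-\phi(y^{(k)})$, with $\phi$ the Hopf--Lax function from the proof of Theorem~\ref{thm:fixed_pt}, and split it into an $H^\ast$-term and a $g$-term.
\begin{itemize}
\item \textbf{The $H^\ast$-term.} Lipschitz continuity of $H^\ast$ bounds it by $t\,L_{H^\ast}\,\|y^{(k)}-y^\ast\|/t$.
\item \textbf{The $g$-term.} It is bounded by a Lipschitz constant of $g$ times $\|y^{(k)}-y^\ast\|$.
\end{itemize}
Part 1 then converts $\|y^{(k)}-y^\ast\|$ into $\|r^{(k)}\|/(1-L_F)$.

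The main obstacle is that the stated constant $L_{H^\ast}/t+L_g$ does not come out naturally, in two ways.
\begin{itemize}
\item \textbf{Scaling of the $H^\ast$-term.} This term naturally gives $L_{H^\ast}$, not $L_{H^\ast}/t$. For $t\le 1$ one has $L_{H^\ast}\le L_{H^\ast}/t$, so the bound is consistent there; otherwise I would interpret the stated constant as a (possibly loose) upper bound.
\item \textbf{Meaning of $L_g$.} The paper defines $L_g$ as the Lipschitz constant of $\nabla g$, not of $g$. I would handle this either by interpreting $L_g$ as also bounding $\|\nabla g\|$ on the relevant segment, or by a mean-value argument: $|g(y^\ast)-g(y^{(k)})|\le \sup_{\xi\in[y^{(k)},y^\ast]}\|\nabla g(\xi)\|\,\|y^{(k)}-y^\ast\|$. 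In the mean-value route one can use optimality, $\nabla g(y^\ast)=\nabla H^\ast((x-y^\ast)/t)$, whose norm is controlled by $L_{H^\ast}$, plus the $L_g$-Lipschitz gradient.
\end{itemize}
I would state explicitly which interpretation is adopted.

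For part 3 (gradient error) I will use the envelope/characteristic identity $\nabla u(x,t)=\nabla g(y^\ast)$, valid since the minimizer is unique and $\phi$ is smooth. Correspondingly, I will read $\nabla u^{(k)}$ as the iterate's gradient approximation $\nabla g(y^{(k)})$. This gives $\|\nabla u-\nabla u^{(k)}\|\le L_g\|y^\ast-y^{(k)}\|$. To reach exactly $L_g\|r^{(k)}\|$ rather than $L_g\|r^{(k)}\|/(1-L_F)$, I will try defining $\nabla u^{(k)}$ through the next iterate, i.e.\ via $\nabla H^\ast((x-y^{(k+1)})/t)=\nabla g(y^{(k)})$, and compare with $\nabla g(y^{(k+1)})$, giving $L_g\|y^{(k+1)}-y^{(k)}\|$. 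This identification is the delicate point; failing it, I would settle for the bound with the extra factor $1/(1-L_F)$.
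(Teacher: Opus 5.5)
Your part 1 is essentially identical to the paper's argument.

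\textbf{Part 2.} Both of your objections are correct, and the paper's own proof makes exactly these slips. It bounds the $H^\ast$-term by $t\,L_{H^\ast}\|y^{(k)}-y^\ast\|/t$ and then records this as $(L_{H^\ast}/t)\|y^{(k)}-y^\ast\|$. It also uses $L_g$, which is a Lipschitz constant of $\nabla g$, as a Lipschitz constant of $g$.

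Do not try to reach the stated constant by reinterpretation. For $t>1$, $L_{H^\ast}/t$ is \emph{smaller} than the $L_{H^\ast}$ you derive, so it is not a looser bound. Even for $t\le 1$, the $g$-term needs a bound on $\|\nabla g\|$ that $L_g$ does not supply. In fact the stated inequality is false. Take $d=1$, $H(p)=\frac{1}{2}p^2$, $g(y)=y$ (so $L_H=1$, $L_g=0$, $L_F=0$), $t=4$ and $y^{(0)}=x$. Then $y^{(1)}=y^\ast=x-4$ and $\|r^{(0)}\|=4$. Also $u(x,4)=x-2$ and $u^{(0)}(x,4)=x$, while $H^\ast(q)=\frac{1}{2}q^2$ is $1$-Lipschitz on the $q$-values $[0,1]$ actually involved. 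The claimed bound therefore reads $2\le 1$. Replacing $g$ by $y+\frac{\varepsilon}{2}y^2$ gives the same failure with small $L_g=\varepsilon>0$. So in part 2 you should state and prove the bound your argument really gives: constant $L_{H^\ast}$ for the first term, and your mean-value estimate of $\|\nabla g\|$ along the segment for the second.

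\textbf{Part 3.} This is the genuine gap in your plan. Your next-iterate identification only bounds $\|\nabla g(y^{(k)})-\nabla g(y^{(k+1)})\|$, which is a difference of two computed quantities. It says nothing about the distance to $\nabla u(x,t)=\nabla g(y^\ast)$, so it does not prove the claim. Your fallback loses the factor $1/(1-L_F)$. The paper's proof simply asserts $\|\nabla g(y^{(k)})-\nabla g(y^\ast)\|\le L_g\|r^{(k)}\|$, whereas Lipschitz continuity plus part 1 only gives the weaker bound.

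The missing idea is co-coercivity. Put $p_k=\nabla g(y^{(k)})$ and $p^\ast=\nabla g(y^\ast)$. Subtracting $y^\ast=x-t\nabla H(p^\ast)$ from $y^{(k+1)}=x-t\nabla H(p_k)$ gives
\[
-r^{(k)}=(y^{(k)}-y^\ast)+t\bigl(\nabla H(p_k)-\nabla H(p^\ast)\bigr).
\]
Take the inner product with $p_k-p^\ast$. The second term contributes a nonnegative amount, by monotonicity of $\nabla H$ (convexity of $H$). The first contributes at least $\|p_k-p^\ast\|^2/L_g$, by the Baillon--Haddad theorem for a convex function with $L_g$-Lipschitz gradient. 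Cauchy--Schwarz then gives $\|p_k-p^\ast\|^2/L_g\le\|p_k-p^\ast\|\,\|r^{(k)}\|$, i.e.\ $\|\nabla g(y^{(k)})-\nabla g(y^\ast)\|\le L_g\|r^{(k)}\|$. This is exactly the stated bound, with no contraction factor appearing.
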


\begin{proof}
\textbf{1. Hopf--Lax minimizer error bound.} 
  By Theorem \ref{thm:fixed_pt}, the minimizer of the Hopf--Lax formula $y^\star$ is the unique fixed point of \(F\).
Since \(F\) is a contraction with constant \(L_F\), we have
\[
\|F(y) - F(y^*)\| \le L_F \|y - y^*\| \quad \forall y \in \mathbb{R}^n.
\]

By definition of the residual,
\[
r^{(k)} = y^{(k+1)} - y^{(k)} = F(y^{(k)}) - y^{(k)}.
\]

Then,
\[
\begin{aligned}
\|y^{(k)} - y^*\| 
&= \|y^{(k)} - F(y^*)\| \\
&= \|y^{(k)} - F(y^{(k)}) + F(y^{(k)}) - F(y^*)\| \\
&\le \|r^{(k)}\| + \|F(y^{(k)}) - F(y^*)\| \\
&\le \|r^{(k)}\| + L_F \|y^{(k)} - y^*\|.
\end{aligned}
\]

Rearranging gives
\[
\|y^{(k)} - y^*\| \le \frac{1}{1 - L_F} \|r^{(k)}\|.
\]

\medskip
\textbf{2. Solution error bound.}  
Let
\[
u(x,t) = t H^*\Big(\frac{x - y^*}{t}\Big) + g(y^*), \quad u^{(k)}(x,t) = t H^*\Big(\frac{x - y^{(k)}}{t}\Big) + g(y^{(k)}).
\]

Using the Lipschitz continuity of \(H^*\) and \(g\),
\[
\begin{aligned}
|u(x,t) - u^{(k)}(x,t)| 
&\le t \left|H^*\Big(\frac{x - y^*}{t}\Big) - H^*\Big(\frac{x - y^{(k)}}{t}\Big)\right| + |g(y^*) - g(y^{(k)})| \\
&\le t \, L_{H^*} \frac{\|y^{(k)} - y^*\|}{t} + L_g \|y^{(k)} - y^*\| \\
&= (L_{H^*}/t + L_g) \|y^{(k)} - y^*\| \\
&\le \frac{L_{H^*}/t + L_g}{1 - L_F} \|r^{(k)}\|.
\end{aligned}
\]

\medskip
\textbf{3. Solution gradient error bound.}  
Taking derivative of Hopf--Lax formula with respect to $x$ gives
\[\nabla u(x,t) = \nabla H^*((x - y^*)/t).
\] 
By incorporating the implicit relation \eqref{eq:fixed_pts}, it
gives
\[\nabla u(x,t) = \nabla H^*(\nabla H(\nabla g(y^\ast)))=\nabla g(y^\ast),
\] 
where the last equality comes from the fact $ \nabla H^*=\nabla H^{-1}$.
Consequently, we have
\[
\begin{aligned}
\|\nabla u - \nabla u^{(k)}\| 
&= \Big\|\nabla g(y^{(k)}) - \nabla g(y^\ast)\Big\| \\
&\le L_g\| r^{(k)} \|.
\end{aligned}
\]

This completes the proof.
\end{proof}

Theorem~\ref{thm:residual_error} provides explicit quantitative bounds relating the residual norm to both the minimization error and the Hopf--Lax approximation error.  
In particular, controlling the error both in the Hopf--Lax minimizer $y^*$ and the solution $u$ is equivalent to controlling the error not only in the viscosity solution itself but also in its gradient.
From the perspective of control theory, this ensures that the error is controlled not only for the value function but also for the associated optimal control, thereby providing rigorous guarantees for both solution and control accuracy.
This equivalence highlights a key strength of the proposed methodology.
In practice, this result provides a rigorous and efficient termination criterion: once the residual \(\|r^{(k)}\|\) falls below a tolerance \(\varepsilon\),
the corresponding error in both the solution and its gradient (or, equivalently, in the value function and the associated optimal control) is guaranteed to remain within a controlled bound.

We now characterize the rate of convergence and the number of iterations required to reach a prescribed error threshold.

\begin{theorem}[Iteration Complexity]\label{thm:iteration_complexity}
Under the same assumption and notation in Theorem \ref{thm:residual_error},
for any initial guess \(y^{(0)} \in \mathbb{R}^d\),
the number of iterations required to achieve a residual norm
\(\|r^{(k)}\| \le \varepsilon\) satisfies
\[
    k \ge 
    \frac{\log\!\Big(\frac{(1 + L_F)\|y^{(0)} - y^*\|}{\varepsilon}\Big)}{-\log L_F}.
\]
In particular, the fixed-point iteration converges linearly with rate \(L_F\),
and thus the iteration complexity scales as
\[
    k = O\big(\log(1/\varepsilon)\big).
\]
\end{theorem}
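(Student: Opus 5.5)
We bound the residual by the distance to the fixed point, propagate that distance through the contraction, and then solve for \(k\).

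\textbf{Step 1 (residual versus error).} By Theorem~\ref{thm:fixed_pt}, \(F\) is a contraction with constant \(L_F<1\) and unique fixed point \(y^\ast\). Since \(y^\ast=F(y^\ast)\), we have \(r^{(k)} = F(y^{(k)}) - y^{(k)}\). Inserting \(y^\ast\) and using the triangle inequality gives
\[
\|r^{(k)}\| \le \|F(y^{(k)})-F(y^\ast)\| + \|y^\ast - y^{(k)}\| \le (1+L_F)\,\|y^{(k)}-y^\ast\|.
\]
This is the reverse counterpart of item~1 of Theorem~\ref{thm:residual_error}. It explains the factor \(1+L_F\) in the statement.

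\textbf{Step 2 (propagating the error).} Iterating the contraction estimate \(\|y^{(j+1)}-y^\ast\| = \|F(y^{(j)})-F(y^\ast)\|\le L_F\|y^{(j)}-y^\ast\|\) gives
\[
\|y^{(k)}-y^\ast\|\le L_F^k\|y^{(0)}-y^\ast\|.
\]
Combining with Step~1,
\[
\|r^{(k)}\|\le (1+L_F)\,L_F^k\,\|y^{(0)}-y^\ast\|.
\]
This already shows linear convergence with rate \(L_F\), both for the iterates and for the residuals.

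\textbf{Step 3 (solving for \(k\)).} It suffices to have \((1+L_F)L_F^k\|y^{(0)}-y^\ast\|\le\varepsilon\). Taking logarithms and dividing by \(\log L_F<0\), which reverses the inequality, gives exactly the stated threshold
\[
k \ge \frac{\log\bigl((1+L_F)\|y^{(0)}-y^\ast\|/\varepsilon\bigr)}{-\log L_F}.
\]
The \(O(\log(1/\varepsilon))\) claim follows because \(L_F\) and \(\|y^{(0)}-y^\ast\|\) are fixed and independent of \(\varepsilon\).

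\textbf{Points needing care.} The degenerate cases are minor. If \(L_F=0\), then \(F\) is constant and \(r^{(k)}=0\) for all \(k\ge1\). If \(y^{(0)}=y^\ast\), all residuals vanish. If the logarithm's argument is below \(1\), the bound is nonpositive and \(k=0\) already works. The main point to state explicitly is interpretive: the displayed inequality is a \emph{sufficient} number of iterations guaranteeing \(\|r^{(k)}\|\le\varepsilon\), not a necessary lower bound. The statement's phrasing ("the number of iterations required \dots satisfies") should be read in that sense.
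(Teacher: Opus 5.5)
Your proposal is correct and follows essentially the same argument as the paper. Both iterate the contraction to get $\|y^{(k)}-y^\ast\|\le L_F^k\|y^{(0)}-y^\ast\|$ and bound the residual by $(1+L_F)L_F^k\|y^{(0)}-y^\ast\|$ via the triangle inequality. The paper splits $r^{(k)}=(y^{(k+1)}-y^\ast)-(y^{(k)}-y^\ast)$, which is the same as your insertion of $F(y^\ast)$. Both then solve for $k$ as a sufficient condition, matching the paper's ``it suffices that''.
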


\begin{proof}
Since \(F\) is a contraction with modulus \(L_F\), it holds that
\[
    \|y^{(k+1)} - y^*\| \le L_F \|y^{(k)} - y^*\|.
\]
By induction,
\[
    \|y^{(k)} - y^*\| \le L_F^k \|y^{(0)} - y^*\|.
\]
The residual satisfies
\[
    r^{(k)} = y^{(k+1)} - y^{(k)}
    = (y^{(k+1)} - y^*) - (y^{(k)} - y^*),
\]
and hence
\[
    \|r^{(k)}\| 
    \le \|y^{(k+1)} - y^*\| + \|y^{(k)} - y^*\|
    \le (1 + L_F)L_F^k \|y^{(0)} - y^*\|.
\]
To achieve \(\|r^{(k)}\| \le \varepsilon\), it suffices that
\[
    (1 + L_F)L_F^k \|y^{(0)} - y^*\| \le \varepsilon.
\]
Solving for \(k\) yields
\[
    k \ge 
    \frac{\log\!\Big(\frac{(1 + L_F)\|y^{(0)} - y^*\|}{\varepsilon}\Big)}{-\log L_F}.
\]
Since \(-\log L_F > 0\) and independent of \(\varepsilon\),
we have \(k = O(\log(1/\varepsilon))\), completing the proof.
\end{proof}

Combining the results of Theorems~\ref{thm:residual_error} and~\ref{thm:iteration_complexity}, the fixed-point iteration is guaranteed to converge linearly, with the convergence rate determined by the contraction modulus \(L_F\). While the formal upper bound on the number of iterations is independent of the problem dimension \(d\), in practice \(L_F\) may increase with \(d\), leading to a moderate slowdown in convergence. Nevertheless, this dimension-dependent effect is significantly less severe than the exponential scaling of computational cost observed in classical mesh-based schemes or recent deep learning-based approaches, rendering the fixed-point method highly scalable for high-dimensional HJ problems.

\subsection{Fixed Point Solver with Multiple Initializations}\label{sec:multi_initialization}
In general, the critical point of the Hopf--Lax formula is not necessarily unique. Even when both the Hamiltonian $H$ and the initial condition $g$ are smooth, the viscosity solution of the Hamilton--Jacobi equation may exhibit non-smooth behavior due to the intersection of characteristics over time, resulting in the formation of kinks. In such cases, multiple stationary points of the Hopf--Lax energy functional may exist, and equivalently, the fixed-point equation~\eqref{eq:fixed_pts} can admit multiple solutions. 

To robustly handle this situation, we adopt a multi-initialization strategy for the fixed-point iteration. The procedure is summarized as follows:

\begin{enumerate}
    \item \textbf{Multiple Random Initializations:} 
    Generate $N$ independent random initial guesses 
    $\{ y_0^{(i)} \}_{i=1}^N$, each sampled uniformly from the hypercube 
    $[-\alpha, \alpha]^d$ for a prescribed $\alpha > 0$.

    \item \textbf{Fixed-Point Iteration:} 
    For each initialization $y_0^{(i)}$, perform the iterative updates
    \[
        y_{k+1}^{(i)} = G(y_k^{(i)}, t, x),
    \]
    until convergence, i.e.,
    \[
        \| y_{k+1}^{(i)} - y_k^{(i)} \|_2 < \varepsilon,
    \]
    or until the maximum iteration count is reached.

    \item \textbf{Energy Evaluation:}
    For each converged solution $y^{(i)}$, compute the Hopf--Lax energy
    \[
        \mathcal{E}(y^{(i)}; x, t) 
        = t\,H^\ast\!\left( \frac{x - y^{(i)}}{t} \right)
        + g\!\left( y^{(i)} \right),
    \]
    where $H^\ast$ denotes the Legendre transform of $H$.

    \item \textbf{Selection of Optimal Fixed Point:}
    Among the converged fixed points, select
    \[
        y^\ast = \arg\min_{i} \mathcal{E}(y^{(i)}; x, t),
    \]
    which corresponds to the minimizer in the Hopf--Lax formula~\eqref{eq:hopf_lax}.
\end{enumerate}

This multi-initialization approach enables the algorithm to explore multiple potential fixed points and ensures that the selected solution corresponds to the true viscosity solution, even in the presence of non-smooth features such as kinks.

\begin{theorem}[Probabilistic Completeness of Multi-Initialization]
\label{thm:prob_complete}
Let $G:\mathbb{R}^d \to \mathbb{R}^d$ be a continuous map, and suppose that it admits 
a finite set of fixed points $\{y_1^*, \dots, y_M^*\}$ satisfying
\[
    y_i^* = G(y_i^*), \quad i = 1, \dots, M.
\]
Assume the following:
\begin{itemize}
    \item (\textbf{Local contraction}) For each $i$, there exists an open neighborhood 
    $\mathcal{B}_i \subset \mathbb{R}^d$ of $y_i^*$ such that 
    \[
        \| G(y) - G(z) \| \leq \kappa_i \|y - z\|, \qquad \forall y,z \in \mathcal{B}_i,
    \]
    with some $\kappa_i \in [0,1)$.
    Hence, each $\mathcal{B}_i$ is a basin of attraction under the iteration $y_{k+1}=G(y_k)$.

    \item (\textbf{Nonzero measure}) Each $\mathcal{B}_i$ has positive Lebesgue measure, i.e. 
    $\mu(\mathcal{B}_i) > 0$.

    \item (\textbf{Independent sampling}) Initial guesses 
    $y_0^{(1)}, \dots, y_0^{(N)}$ are drawn independently 
    from a probability distribution $\rho$ 
    that is absolutely continuous with respect to the Lebesgue measure, 
    supported on a compact set $\Omega \subset \mathbb{R}^d$ satisfying 
    $\mu(\Omega) > 0$.
\end{itemize}

Then, with probability one, as $N \to \infty$, the fixed-point iteration
\[
    y_{k+1}^{(j)} = G(y_k^{(j)}), \qquad j=1,\dots,N,
\]
converges to all fixed points $\{y_i^*\}_{i=1}^M$.
\end{theorem}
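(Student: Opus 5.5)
The plan is to reduce the statement to a Borel--Cantelli / law-of-large-numbers argument. Before that, one gap in the hypotheses has to be closed. A local contraction on an arbitrary open set $\mathcal{B}_i$ does not by itself make $\mathcal{B}_i$ forward invariant. So I will first replace each $\mathcal{B}_i$ by a closed ball $\bar B(y_i^*, r_i)\subset\mathcal{B}_i$ with $r_i>0$, which exists because $\mathcal{B}_i$ is an open neighborhood of $y_i^*$. For $y$ in this ball we have
\[
\|G(y)-y_i^*\| = \|G(y)-G(y_i^*)\| \le \kappa_i\|y-y_i^*\| \le \kappa_i r_i < r_i ,
\]
so the ball is mapped into itself. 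Since it is complete, the Banach fixed-point theorem, exactly as in Theorem~\ref{thm:fixed_pt}, gives $\|y_k-y_i^*\|\le \kappa_i^k\|y_0-y_i^*\|\to 0$ for every $y_0$ in the ball. It also shows that $y_i^*$ is the only fixed point in that ball. I will call the ball $D_i$; it is a genuine basin of attraction of positive Lebesgue measure.

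The second step is to show that each basin has positive $\rho$-probability, $p_i:=\rho(D_i)>0$. This is the main obstacle. Absolute continuity of $\rho$ only says that Lebesgue-null sets are $\rho$-null. It does not say the converse, and it does not guarantee that $D_i$ meets $\operatorname{supp}\rho=\Omega$ at all. As stated, the theorem therefore needs a coverage condition that is implicit in the multi-initialization procedure. My first choice is to assume that $\Omega$ contains $D_i$ (or at least meets it in a set of positive measure) and that the density of $\rho$ is bounded below on $\Omega$, as it is for the uniform distribution on $[-\alpha,\alpha]^d$ used in Section~\ref{sec:multi_initialization}, with $\alpha$ large enough that $y_i^*\in(-\alpha,\alpha)^d$. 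Under that condition
\[
p_i \ge \Big(\inf_\Omega \tfrac{d\rho}{d\mu}\Big)\,\mu(D_i\cap\Omega)>0 .
\]
I will state this interpretation explicitly in the proof.

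The final step is the probabilistic count. The events $\{y_0^{(j)}\in D_i\}$, $j=1,2,\dots$, are independent and each has probability $p_i$. Hence
\[
\mathbb{P}\big(\text{no } y_0^{(j)}\in D_i \text{ for } j\le N\big)=(1-p_i)^N\to 0 .
\]
Moreover $\sum_N (1-p_i)^N<\infty$, so the first Borel--Cantelli lemma applies (equivalently, the strong law of large numbers gives $\#\{j\le N: y_0^{(j)}\in D_i\}/N\to p_i$ almost surely). Therefore, almost surely, some $j$ has $y_0^{(j)}\in D_i$, and in fact infinitely many do. A union bound over the finitely many indices $i=1,\dots,M$ preserves probability one:
\[
\mathbb{P}\Big(\exists\, i\le M:\ D_i \text{ is never hit}\Big)\le \sum_{i=1}^M \mathbb{P}(D_i \text{ is never hit}) = 0 .
\]
For each such $j$, the first step gives $y_k^{(j)}\to y_i^*$. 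So almost surely, once $N$ is large enough, the multi-start iteration has converged to every fixed point $y_1^*,\dots,y_M^*$. I will also record the quantitative form: for any confidence level $1-\delta$, it suffices to take $N\ge \log(M/\delta)/\min_i p_i$.
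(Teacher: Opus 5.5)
Your proof follows the same skeleton as the paper's. You bound the per-basin miss probability by $(1-p_i)^N\to 0$, combine over the finitely many basins, and then use local contraction to get convergence from any start in a basin that is hit. Where you depart from the paper, you are repairing steps it takes for granted, and the repairs are correct.

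The paper asserts $p_i=\int_{\mathcal{B}_i}\rho>0$ directly from absolute continuity of $\rho$ and $\mu(\mathcal{B}_i)>0$. That is exactly the non-sequitur you identify. Your coverage assumption is therefore a necessary amendment to the statement, not a weakness of your proof, because the statement is false as written. For example, take $d=1$ and $G$ odd and continuous with $G(y)=y/2$ on $[-1,1]$, $G(y)=-2y$ for $|y|\ge 2$, and $G$ linear in between. Take $\rho$ uniform on $[3,4]$. The only fixed point is $0$, and it is locally contracting on $\mathcal{B}_1=(-1,1)$, yet every run diverges. The cleanest form of the hypothesis you need is simply $\rho(D_i)>0$; your density lower bound is one way to guarantee it.

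The paper also applies the Banach theorem on $\mathcal{B}_i$ without checking that $\mathcal{B}_i$ is invariant under $G$. Your shrinking to $\bar B(y_i^*,r_i)$ fixes this.

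It writes the coverage probability as $\prod_i\bigl(1-(1-p_i)^N\bigr)$, which implicitly treats the hitting events for different basins as independent. Your union bound needs no such assumption, and it also yields the explicit sample size $N\ge\log(M/\delta)/\min_i p_i$.

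Finally, the paper only shows $\mathbb{P}_N\to 1$ before concluding ``with probability one.'' Your argument on the infinite i.i.d. sequence gives a genuine almost-sure statement.
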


\begin{proof}
For each $i$, denote the probability that a single random initialization 
falls within the basin $\mathcal{B}_i$ as 
\[
    p_i = \int_{\mathcal{B}_i} \rho(y) \, dy.
\]
By assumption, $\rho$ is absolutely continuous and $\mu(\mathcal{B}_i) > 0$, 
hence $p_i > 0$.

The probability that none of the $N$ independent initializations falls within $\mathcal{B}_i$ is $(1-p_i)^N$. 
Therefore, the probability that at least one initialization lies in $\mathcal{B}_i$ is
\[
    1 - (1-p_i)^N.
\]
Since there are finitely many basins ($M<\infty$), the probability that all basins are covered is
\[
    \mathbb{P}_N = \prod_{i=1}^M \big( 1 - (1-p_i)^N \big).
\]
As $N \to \infty$, each term satisfies $\lim_{N\to\infty} (1 - (1-p_i)^N) = 1$, 
hence by continuity of finite products,
\[
    \lim_{N\to\infty} \mathbb{P}_N = 1.
\]
This proves that with probability approaching one, every basin $\mathcal{B}_i$ is sampled at least once.

Within each $\mathcal{B}_i$, the local contraction property ensures that 
for any initialization $y_0^{(j)} \in \mathcal{B}_i$, 
the iteration $y_{k+1}^{(j)} = G(y_k^{(j)})$ converges linearly to the corresponding 
fixed point $y_i^*$ by the Banach fixed point Theorem.

Finally, since each fixed point $y_i^*$ is stationary for the Hopf--Lax energy $\mathcal{E}$, 
the minimizer of $\mathcal{E}$ among $\{y_i^*\}_{i=1}^M$ corresponds to the global minimizer of the Hopf--Lax variational formula. 
Because all $\{y_i^*\}$ are found almost surely as $N\to\infty$, 
the selected minimizer $y^*$ equals the optimal solution almost surely.
\end{proof}

Theoretically, infinitely many initial points are required to guarantee convergence to all fixed points. In practice, however, a finite number of initial guesses suffices, as demonstrated in Section~\ref{sec:experiments}.

\section{Experimental Results}\label{sec:experiments}
In this section, we evaluate the accuracy and efficiency of the proposed fixed-point method through various numerical experiments. 
We demonstrate its scalability to high-dimensional problems, compare its performance with existing solvers in terms of accuracy and computational efficiency, and verify its ability to recover optimal controls as well as approximate nonsmooth viscosity solutions.

All experiments were conducted on a Linux server equipped with an Intel Core i7-6850K CPU (6 cores, 12 threads) at 3.6 GHz, 62 GB RAM, and an NVIDIA TITAN V GPU (12 GB). 
To ensure a fair comparison with existing deep learning approaches, both the WENO solver and the proposed fixed-point method were implemented in Python 3.

\subsection{High-dimensional problems}
\subsubsection{Burgers' equation}
We consider the Hamilton--Jacobi setup with 
\begin{equation}
H(p) = \frac{1}{2} \|p\|_2^2, \qquad g(x) = \frac{1}{2} \|x\|_2^2,
\end{equation}
which possesses a unique critical point at the origin. The exact solution is given by $u(x,t)=\frac{ \|x\|_2^2}{2(1+t)}$. Using the fixed-point iteration 
\[
y^{(k+1)} = x - t \nabla H(\nabla g(y^{(k)})).
\]

For comparison, we also consider the fifth-order WENO scheme as a representative grid-based method and ImplicitHJ \cite{park2025implicit}, a deep-learning-based approach for approximating viscosity solutions of HJ equations. Performance is evaluated in 1, 2, 3, 10, 50, and 100 dimensions. Table~\ref{tab:comparison} summarizes the $L^2$ and $L^\infty$ errors of the solution and its gradient, along with computational time and memory usage. All errors are computed over 128  points.  

WENO computations use a uniform grid of 128 points with a CFL number of 0.2. ImplicitHJ is trained following the original configuration in the reference \cite{park2025implicit}, and its computational time corresponds to the total training time. The fixed-point method is applied with a maximum of 1000 iterations and a convergence tolerance of $10^{-6}$ across all dimensions. For fair comparison, all methods are implemented in Python.

It should be noted that these three approaches are fundamentally different in their design principles and computational characteristics. Establishing a fully “fair” comparison is challenging, particularly when considering extremely high-dimensional problems. In setting up the experiments, careful attention was given to ensure that each method is evaluated in a manner representative of its intended use, while maintaining consistency in reported metrics across dimensions.

\begin{table}[t]
\centering
\caption{Comparison of numerical and gradient errors, computation time, and memory usage for different dimensions using WENO, ImplicitHJ, and Ours methods.}
\label{tab:comparison}
\renewcommand{\arraystretch}{1.15}
\setlength{\tabcolsep}{3.5pt}
\resizebox{\textwidth}{!}{%
\begin{tabular}{c|c|cccccc}
\hline
\textbf{Dim} & \textbf{Method} & $\mathbf{L^2}$ \textbf{Error} & $\mathbf{L^\infty}$ \textbf{Error} & $\mathbf{L^2}$ \textbf{Grad Error} & $\mathbf{L^\infty}$ \textbf{Grad Error} & \textbf{Time (s)} & \textbf{Mem (MB)} \\
\hline
1D & WENO & $1.84\times10^{-10}$ & $1.52\times10^{-4}$ & $3.06\times10^{-5}$ & $6.56\times10^{-3}$ & 0.34 & 0.05 \\
   & ImplicitHJ & $4.88\times10^{-10}$ & $6.91\times10^{-5}$ & $1.70\times10^{-6}$ & $4.08\times10^{-3}$ & 156081.30 & 0.27 \\
   & Ours & $2.43\times10^{-17}$ & $1.49\times10^{-8}$ & $6.37\times10^{-15}$ & $2.98\times10^{-7}$ & 0.034 & 0.001 \\
\hline
2D & WENO & $2.30\times10^{-6}$ & $8.62\times10^{-3}$ & $3.72\times10^{-5}$ & $1.58\times10^{-1}$ & 8.45 & 4.42 \\
   & ImplicitHJ & $1.57\times10^{-8}$ & $3.40\times10^{-4}$ & $1.22\times10^{-5}$ & $8.95\times10^{-3}$ & 175223.30 & 0.34 \\
   & Ours & $2.82\times10^{-16}$ & $5.96\times10^{-8}$ & $1.03\times10^{-14}$ & $2.98\times10^{-7}$ & 0.04 & 0.001 \\
\hline
3D & WENO & $7.22\times10^{-3}$ & $2.54\times10^{-2}$ & $5.46\times10^{-3}$ & $8.91\times10^{-1}$ & 1768.01 & 642.58 \\
   & ImplicitHJ & $1.52\times10^{-7}$ & $1.06\times10^{-3}$ & $7.70\times10^{-5}$ & $2.46\times10^{-2}$ & 177784.40 & 0.42 \\
   & Ours & $5.98\times10^{-16}$ & $1.19\times10^{-7}$ & $1.81\times10^{-14}$ & $2.98\times10^{-7}$ & 0.04 & 0.001 \\
\hline
10D & WENO & N/A & N/A & N/A & N/A & N/A & N/A \\
    & ImplicitHJ & $5.05\times10^{-6}$ & $6.62\times10^{-3}$ & $1.45\times10^{-3}$ & $3.83\times10^{-2}$ & 195997.48 & 0.95 \\
    & Ours & $7.90\times10^{-15}$ & $2.38\times10^{-7}$ & $6.10\times10^{-14}$ & $3.58\times10^{-7}$ & 0.04 & 0.001 \\
\hline
50D & WENO & N/A & N/A & N/A & N/A & N/A & N/A \\
    & ImplicitHJ & $4.71\times10^{-2}$ & $3.67\times10^{-1}$ & $2.58\times10^{-1}$ & $2.53\times10^{-1}$ & 258987.28 & 4.01 \\
    & Ours & $2.66\times10^{-13}$ & $1.91\times10^{-6}$ & $2.97\times10^{-13}$ & $3.58\times10^{-7}$ & 0.05 & 0.001 \\
\hline
100D & WENO & N/A & N/A & N/A & N/A & N/A & N/A \\
     & ImplicitHJ & $1.37\times10^{-1}$ & $4.91\times10^{-1}$ & $4.20\times10^{-1}$ & $5.46\times10^{-1}$ & 324249.12 & 7.82 \\
     & Ours & $1.23\times10^{-12}$ & $3.81\times10^{-6}$ & $6.06\times10^{-13}$ & $3.58\times10^{-7}$ & 0.05 & 0.001 \\
\hline
\end{tabular}}
\end{table}

The results in Table~\ref{tab:comparison} demonstrate the superior performance of the proposed fixed-point method across all dimensions. Notably, our method consistently achieves solution and gradient errors near machine precision, whereas WENO and ImplicitHJ exhibit increasing errors as the dimension grows. In 3D and higher, WENO becomes computationally infeasible, highlighting the curse of dimensionality inherent to grid-based approaches. ImplicitHJ, while scalable, requires extensive training and exhibits non-negligible errors in high dimensions.  

In contrast, the fixed-point method is \emph{mesh-free}, \emph{causality-free}, \emph{gradient-free}, and \emph{training-free}, making it inherently scalable to very high-dimensional problems. It computes both the solution and its gradient directly, with theoretically guaranteed accuracy, thereby eliminating approximation errors associated with discretization or neural-network training. Furthermore, computational time and memory usage remain nearly constant across dimensions, as the method does not rely on dense grids or iterative solvers that scale poorly with dimension, in agreement with the bounds established in Theorem~\ref{thm:residual_error}.

These results highlight the practical advantages of the fixed-point approach: it provides theoretically grounded, high-precision approximations for both the solution and its gradient, while circumventing the key limitations of conventional grid-based and deep-learning-based methods. The combination of \emph{dimension-independent efficiency} and \emph{provable accuracy} positions our method as a robust and scalable alternative for solving high-dimensional Hamilton--Jacobi equations.

Remarkably, even in the 100-dimensional case, the $L^2$-error of the computed solution was as low as $2.06 \times 10^{-14}$, demonstrating both the high accuracy and the scalability of the proposed fixed-point scheme in high-dimensional settings.

\subsubsection{LQR Control Problem}
We consider an Linear quadratic regulator (LQR)-type optimal control problem in which the cost functional is quadratic in both the control and the initial state:
\begin{equation}\label{eq:lqr_ocp}
u(x,t) = \min_{q(\cdot)} \left\{ \frac{1}{2} \int_0^t q(s)^\top R q(s)\, ds + \frac{1}{2} y(0)^\top Q y(0) \;\Big|\; 
\dot y(s) = q(s),\ y(t) = x \right\},
\end{equation}
where \(R\) and \(Q\) are symmetric positive definite matrices. The corresponding Hamiltonian is of the anisotropic quadratic form:
\[
H(p) = \frac{1}{2} p^\top R^{-1} p.
\]

The problem admits a closed-form analytic solution:
\begin{equation}
y^* = \big(I + t Q R^{-1} \big)^{-1} x, \qquad 
u(x,t) = \frac{1}{2} x^\top \big(I + t Q R^{-1}\big)^{-1} Q x.
\end{equation}

The proposed fixed-point iteration
\[
y^{(k+1)} = x - t \, y^{(k)} (R^{-1} Q)^\top
\]
is guaranteed to converge if 
\[
t < \frac{1}{\|R^{-1} Q\|_2},
\]
where \(\| \cdot \|_2\) denotes the spectral norm.

For a given dimension \(d\), we generated \(Q, R \in \mathbb{R}^{d \times d}\) as symmetric positive definite matrices using random Gaussian matrices. Specifically, we drew \(A, B \in \mathbb{R}^{d \times d}\) with independent standard normal entries and constructed
\[
Q = s_1 (A^\top A + I_d), \qquad 
R = s_2 (B^\top B + I_d),
\]
with positive scaling factors \(s_1, s_2 > 0\), ensuring \(Q, R \succ 0\).

We evaluated the fixed-point iteration at 100 randomly sampled points \((x,t)\) satisfying the contraction condition \(t < 1/\|R^{-1} Q\|_2\), and recorded the \(L^2\) and \(L^\infty\) errors as well as the computational time. Table~\ref{tab:lqr_results} summarizes the results for two independent random realizations of \(Q\) and \(R\). Both sets of experiments demonstrate that the fixed-point iteration achieves high accuracy and low computational cost across dimensions.

\begin{table}[t]
\centering
\caption{Fixed-point results for the LQR control problem with two independent random realizations of \(Q\) and \(R\). Errors are measured against the analytic solution.}
\label{tab:lqr_results}
\resizebox{\textwidth}{!}{%
\begin{tabular}{c|c|c|c|c|c|c}
\hline
\multirow{2}{*}{\textbf{Dim}} & \multicolumn{3}{c|}{\textbf{First Random Set}} & \multicolumn{3}{c}{\textbf{Second Random Set}} \\
\cline{2-7}
 & $L^2$ error & $L^\infty$ error & Time (s) & $L^2$ error & $L^\infty$ error & Time (s) \\
\hline
1   & $6.70 \times 10^{-17}$ & $1.19 \times 10^{-7}$ & 0.0041 & $4.31 \times 10^{-16}$ & $5.96 \times 10^{-8}$ & 0.0045 \\
10  & $3.26 \times 10^{-12}$ & $5.72 \times 10^{-6}$ & 0.0050 & $4.82 \times 10^{-12}$ & $7.63 \times 10^{-6}$ & 0.0052 \\
50  & $2.21 \times 10^{-9}$  & $1.22 \times 10^{-4}$ & 0.0069 & $1.87 \times 10^{-9}$ & $1.22 \times 10^{-4}$ & 0.0080 \\
100 & $3.84 \times 10^{-8}$  & $4.88 \times 10^{-4}$ & 0.0171 & $2.25 \times 10^{-8}$ & $3.66 \times 10^{-4}$ & 0.0104 \\
\hline
\end{tabular}}
\end{table}

\subsubsection{Cubic Hamiltonian Problem}
We consider the HJ equation with nonlinear Hamiltonian and initial condition
\[
H(p) = \frac{1}{3} \|p\|^3, \qquad g(x) =\| x\|^3,
\]
for which the corresponding Legendre transform is
\[
H^*(q) = \frac{2}{3} \|q\|^{3/2}.
\]

We evaluated the minimization for different dimensions \(d\) and computed the $L^2$ and maximum errors relative to the analytic solution. The results are summarized in Table~\ref{tab:hopf_lax_results}.

\begin{table}[t]
\centering
\caption{Fixed-point results for the cubical problem.}
\label{tab:hopf_lax_results}
\begin{tabular}{c|c|c|c}
\hline
\textbf{Dim} & \textbf{$L^2$ Error} & \textbf{$L^\infty$ Error} & \textbf{Time (s)}\\
\hline
1   & $4.53 \times 10^{-9}$ & $8.81 \times 10^{-8}$ & 0.0409\\
10  & $2.92 \times 10^{-8}$ & $1.25 \times 10^{-7}$ & 0.0458\\
50  & $1.73 \times 10^{-7}$ & $4.80 \times 10^{-7}$ & 0.0476\\
100 & $4.14 \times 10^{-7}$ & $1.15 \times 10^{-6}$ & 0.0561\\
\hline
\end{tabular}
\end{table}

The results demonstrate that the Hopf--Lax minimization achieves high accuracy across dimensions, with errors and computational time increasing only mildly as the dimension grows.






\subsection{Non-smooth solutions}
\subsubsection{Absolute-value Quadratic Form}
We consider Burgers' equation with a non-smooth initial condition
\[
g(x) = x |x|,
\] 
which can be equivalently formulated as the following optimal control problem:
\begin{equation}
u(x,t) = \inf_{q(\cdot)} \left\{ \frac{1}{2}\int_0^t q(s)^2 \, ds + g(y(0)) \;\Big|\; y(t) = x, \;\dot y(s) = q(s), \; 0 \le s \le t \right\}.
\end{equation}

The solution predicted using the fixed-point iteration is shown in Figure~\ref{fig:non_smooth}. As time progresses, one can observe the formation of kinks in the solution, supporting that the proposed method is capable of accurately capturing such sharp features arising from non-smooth initial data.

\begin{figure}
    \centering
    \includegraphics[width=0.95\linewidth]{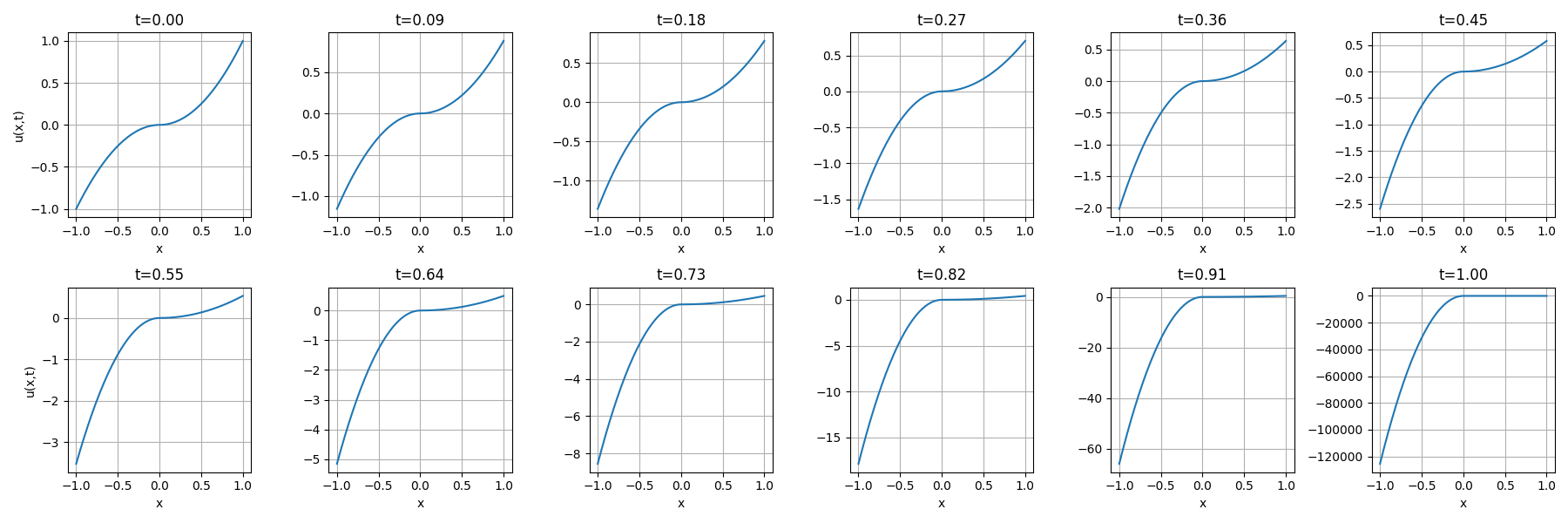}
    \caption{Solution of Burgers' equation with non-smooth initial condition $g(x) = x|x|$, computed via fixed-point iteration. Sharp gradients (kinks) develop as time evolves.}
    \label{fig:non_smooth}
\end{figure}

\subsubsection{Logarithmic Form}
We consider Burgers' equation with a non-smooth initial condition
\[
g(x) = x^2\log(2+ |x|).
\]

The solution predicted using the fixed-point iteration is shown in Figure~\ref{fig:non_smooth_log}. As time progresses, one can observe the formation of kinks in the solution, supporting that the proposed method is capable of accurately capturing such sharp features arising from non-smooth initial data.

\begin{figure}
    \centering
    \includegraphics[width=0.95\linewidth]{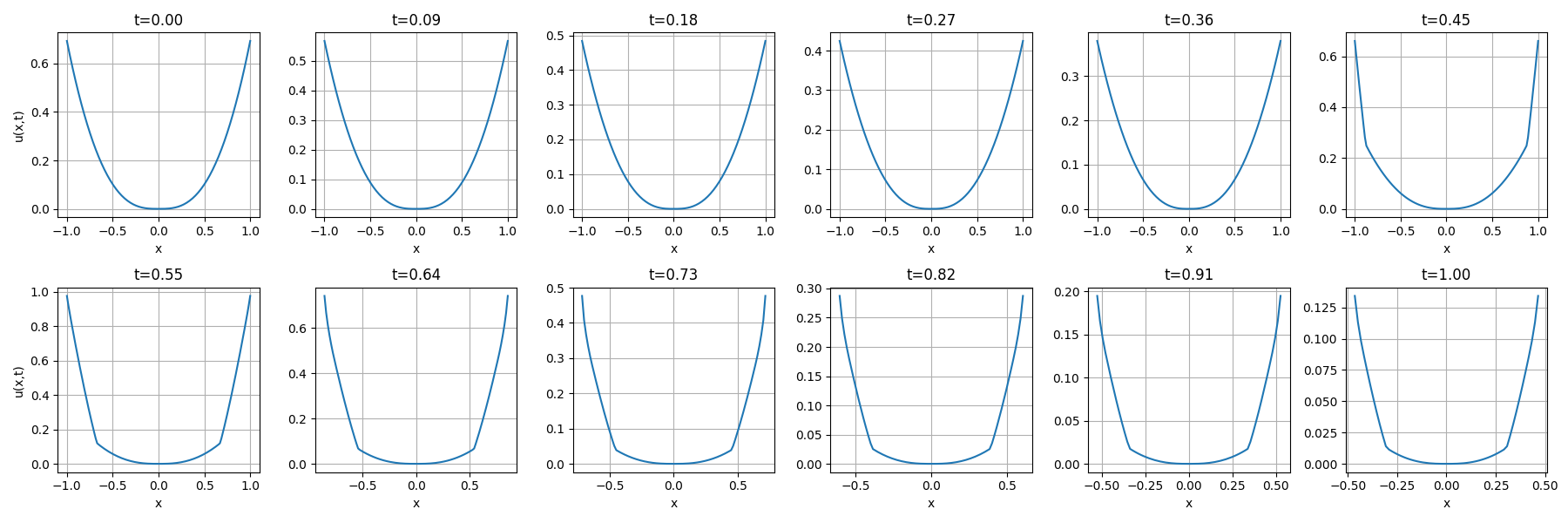}
    \caption{Solution of Burgers' equation with non-smooth initial condition $g(x) = x^2\log(1+|x|)$, computed via fixed-point iteration. Sharp gradients (kinks) develop as time evolves.}
    \label{fig:non_smooth_log}
\end{figure}

\subsection{Multiple Initialization}
This section investigates the effectiveness of multiple initialization scheme in handling kinks arising from intersecting characteristics.

\subsubsection{Steady Kink}
Consider the quadratic Hamiltonian
 $H\left(p\right) = \frac{1}{2}\left\Vert p\right\Vert_2^2$ and initial function $g\left(\bx\right) = -\left\Vert x\right\Vert_1$. 
The exact solution is 
\[
u\left(x,t\right)=-\left\Vert x\right\Vert_1-\frac{dt}{2},
\] where a steady kink occurs at \(x=0\). This arises because characteristics overlap in the region \(-t < x < t\), resulting in non-uniqueness of fixed points within this region. In this experiment, we examine whether the multiple initialization strategy proposed in Section \ref{sec:multi_initialization} is capable of effectively identifying the fixed point that corresponds to the viscosity solution among these multiple possibilities.

\begin{table}[ht]
\centering
\caption{Comparison of numerical and gradient errors, computation time, and memory usage for different dimensions using WENO, ImplicitHJ, and Ours methods on the steady kink example.}
\label{tab:comparison_steady_kink}
\renewcommand{\arraystretch}{1.15}
\setlength{\tabcolsep}{3.5pt}
\resizebox{\textwidth}{!}{%
\begin{tabular}{c|c|cccccc}
\hline
\textbf{Dim} & \textbf{Method} & $\mathbf{L^2}$ \textbf{Error} & $\mathbf{L^\infty}$ \textbf{Error} & $\mathbf{L^2}$ \textbf{Grad Error} & $\mathbf{L^\infty}$ \textbf{Grad Error} & \textbf{Time (s)} & \textbf{Mem (MB)} \\
\hline
1D & WENO & $1.51\times10^{-7}$ & $1.59\times10^{-3}$ & $2.17\times10^{-4}$ & $2.44\times10^{-3}$ & 0.35 & 0.07 \\
   & ImplicitHJ & $8.59\times10^{-6}$ & $1.72\times10^{-5}$ & $2.01\times10^{-5}$ & $3.15\times10^{-5}$ & 210241.37 & 52.91 \\
   & Ours & $1.54\times10^{-14}$ & $3.43\times10^{-6}$ & $2.15\times10^{-13}$ & $1.08\times10^{-6}$ & 25.97 & 3.86 \\
\hline
2D & WENO & $3.64\times10^{-5}$ & $1.02\times10^{-2}$ & $7.77\times10^{-3}$ & $7.45\times10^{-1}$ & 4.50 & 4.42 \\
   & ImplicitHJ & $1.10\times10^{-4}$ & $2.05\times10^{-4}$ & $3.15\times10^{-4}$ & $5.10\times10^{-4}$ & 168770.98 & 52.99 \\
   & Ours & $2.82\times10^{-13}$ & $1.13\times10^{-5}$ & $4.69\times10^{-12}$ & $5.47\times10^{-5}$ & 1.2 & 0.38 \\
\hline
3D & WENO & $5.38\times10^{-5}$ & $1.20\times10^{-2}$ & $9.21\times10^{-2}$ & $7.65\times10^{-1}$ & 1728.87 & 642.54 \\
   & ImplicitHJ & $1.15\times10^{-4}$ & $2.05\times10^{-4}$ & $3.20\times10^{-4}$ & $5.15\times10^{-4}$ & 177594.47 & 53.06 \\
   & Ours & $6.36\times10^{-11}$ & $2.41\times10^{-4}$ & $1.56\times10^{-9}$ & $4.60\times10^{-4}$ & 0.32 & 0.11 \\
\hline
10D & WENO & N/A & N/A & N/A & N/A & N/A & N/A \\
    & ImplicitHJ & $1.23\times10^{-3}$ & $2.45\times10^{-3}$ & $3.56\times10^{-3}$ & $6.32\times10^{-3}$ & 195088.43 & 53.60 \\
    & Ours & $2.82\times10^{-8}$ & $1.13\times10^{-3}$ & $4.69\times10^{-7}$ & $8.41\times10^{-3}$ & 1.20 & 0.38 \\
\hline
50D & WENO & N/A & N/A & N/A & N/A & N/A & N/A \\
    & ImplicitHJ & $5.12\times10^{-1}$ & $1.02\times10^{0}$ & $1.08\times10^{0}$ & $1.05\times10^{0}$ & 242050.79 & 56.70 \\
    & Ours & $1.72\times10^{-5}$ & $2.90\times10^{-2}$ & $6.25\times10^{-3}$ & $1.00\times10^{0}$ & 18.25 & 3.86 \\
\hline
100D & WENO & N/A & N/A & N/A & N/A & N/A & N/A \\
     & ImplicitHJ & $3.32\times10^{-1}$ & $6.65\times10^{-1}$ & $1.13\times10^{0}$ & $1.10\times10^{0}$ & 307335.52 & 62.61 \\
     & Ours & $7.54\times10^{-4}$ & $3.43\times10^{-2}$ & $2.15\times10^{-1}$ & $1.00\times10^{0}$ & 25.97 & 3.86 \\
\hline
\end{tabular}%
}
\end{table}

The results of our methodology with multiple initialization, compared with WENO and ImplicitHJ, are summarized in Table \ref{tab:comparison_steady_kink}. In the experiments, the number of initialization points was set to $100 \times \text{dim}$. 
From the results, we observe that the multiple-initialization approach effectively captures multiple fixed points and accurately approximates solutions exhibiting kinks. Moreover, it demonstrates better scalability compared to WENO, while being more accurate and stable than ImplicitHJ. Although the use of multiple initialization increases computational cost, our method remains faster than both baseline methods.

\subsubsection{Unsteady Kink}
We now consider a problem exhibiting an unsteady kink. Let the Hamiltonian and initial function be
 $H\left(p\right) = \frac{1}{2}\left\Vert p\right\Vert_2^2$ and initial function $g\left(\bx\right) = \sum_{i=1}^d g_0(x_i)$, where $g_0(x)=\begin{cases}
     x,&\text{ if } x<0\\
      0,&\text{ if } x\geq 0. 
      \end{cases}. $
The exact solution is given by
\[
u(x,t) = \sum_{x_i < t/2} (x_i - t/2),
\]
which exhibits a kink moving in time at \(x = t/2\). In this problem, characteristics overlap in the region \(0 < x < t\), leading to non-uniqueness of fixed points. We investigate whether the multiple initialization scheme can effectively capture such moving kinks.

The results of our multiple initialization scheme, compared to WENO and ImplicitHJ, are summarized in Table \ref{tab:comparison_unsteady_kink}. Similar to the steady kink case, the multiple-initialization fixed point approach effectively captures solutions exhibiting unsteady kinks. It demonstrates superior scalability and is more accurate and stable than both WENO and ImplicitHJ. Although the use of multiple initialization increases computational cost, our method remains faster than both baseline approaches.

\begin{table}[ht]
\centering
\caption{Comparison of numerical and gradient errors, computation time, and memory usage for different dimensions using WENO, ImplicitHJ, and Ours methods on the non-steady kink example.}
\label{tab:comparison_unsteady_kink}
\renewcommand{\arraystretch}{1.15}
\setlength{\tabcolsep}{3.5pt}
\resizebox{\textwidth}{!}{%
\begin{tabular}{c|c|cccccc}
\hline
\textbf{Dim} & \textbf{Method} & $\mathbf{L^2}$ \textbf{Error} & $\mathbf{L^\infty}$ \textbf{Error} & $\mathbf{L^2}$ \textbf{Grad Error} & $\mathbf{L^\infty}$ \textbf{Grad Error} & \textbf{Time (s)} & \textbf{Mem (MB)} \\
\hline
1D & WENO & $1.46\times10^{-7}$ & $2.02\times10^{-3}$ & $2.19\times10^{-3}$ & $2.40\times10^{-1}$ & 0.38 & 0.05 \\
   & ImplicitHJ & $5.16\times10^{-4}$ & $1.33\times10^{-1}$ & $5.57\times10^{-2}$ & $9.57\times10^{-1}$ & 158959.15 & 0.27 \\
   & Ours & $1.51\times10^{-16}$ & $1.39\times10^{-7}$ & $7.81\times10^{-13}$ & $2.10\times10^{-4}$ & 0.003 & 0.002 \\
\hline
2D & WENO & $6.84\times10^{-5}$ & $1.68\times10^{-2}$ & $2.25\times10^{-3}$ & $4.59\times10^{-1}$ & 9.072 & 4.43 \\
   & ImplicitHJ & $1.01\times10^{-4}$ & $1.04\times10^{-1}$ & $1.62\times10^{-2}$ & $9.13\times10^{-1}$ & 169483.23 & 0.34 \\
   & Ours & $3.17\times10^{-15}$ & $6.37\times10^{-6}$ & $7.81\times10^{-10}$ & $7.31\times10^{-3}$ & 0.21 & 0.08 \\
\hline
3D & WENO & $4.16\times10^{-5}$ & $1.76\times10^{-1}$ & $2.03\times10^{0}$ & $5.66\times10^{0}$ & 1734.16 & 642.54 \\
   & ImplicitHJ & $2.70\times10^{-3}$ & $4.08\times10^{-1}$ & $6.89\times10^{-2}$ & $1.08\times10^{0}$ & 185356.57 & 0.42 \\
   & Ours & $6.36\times10^{-15}$ & $2.41\times10^{-5}$ & $1.56\times10^{-2}$ & $8.27\times10^{-1}$ & 0.32 & 0.12 \\
\hline
10D & WENO & N/A & N/A & N/A & N/A & N/A & N/A \\
    & ImplicitHJ & $1.01\times10^{-4}$ & $3.02\times10^{-2}$ & $3.14\times10^{-2}$ & $6.32\times10^{-1}$ & 192581.22 & 0.95 \\
    & Ours & $2.82\times10^{-9}$ & $1.13\times10^{-3}$ & $4.69\times10^{-2}$ & $1.00\times10^{0}$ & 1.20 & 0.38 \\
\hline
50D & WENO & N/A & N/A & N/A & N/A & N/A & N/A \\
    & ImplicitHJ & $2.01\times10^{-4}$ & $5.24\times10^{-2}$ & $1.08\times10^{-1}$ & $5.48\times10^{-1}$ & 256352.28 & 4.01 \\
    & Ours & $1.72\times10^{-6}$ & $3.90\times10^{-3}$ & $6.25\times10^{-2}$ & $1.00\times10^{0}$ & 18.25 & 3.86 \\
\hline
100D & WENO & N/A & N/A & N/A & N/A & N/A & N/A \\
     & ImplicitHJ & $3.32\times10^{-1}$ & $1.57\times10^{0}$ & $1.13\times10^{0}$ & $1.57\times10^{0}$ & 321914.77 & 7.82 \\
     & Ours & $1.54\times10^{-4}$ & $3.43\times10^{-2}$ & $2.15\times10^{-1}$ & $1.00\times10^{0}$ & 25.97 & 3.86 \\
\hline
\end{tabular}%
}
\end{table}

\section{Conclusion}

We have presented a novel fixed-point iteration framework for solving high-dimensional Hamilton--Jacobi equations via the Hopf--Lax formula. By reformulating the variational problem as a fixed-point equation, the proposed method enables direct, mesh-free evaluation of viscosity solutions and associated optimal controls without requiring spatial discretization, characteristic tracing, or repeated differentiation.
Our theoretical analysis establishes convergence and error bounds under standard convexity and Lipschitz continuity assumptions, ensuring both accuracy and stability. Although the convexity requirement slightly limits the class of problems to which the method can be directly applied, numerical experiments in dimensions up to 100 demonstrate that the approach achieves high precision, low computational cost, and near dimension-independent scalability, outperforming classical grid-based solvers and recent deep learning methods in both efficiency and reliability. Overall, the fixed-point framework complements the limitations of existing classical numerical schemes and modern deep learning approaches, offering a scalable, provably accurate, and computationally efficient methodology for high-dimensional Hamilton--Jacobi problems, with potential applications in optimal control, robotics, finance, and other domains involving high-dimensional PDEs.

Several directions remain for future research. Extending the method to nonconvex or non-Lipschitz Hamiltonians would broaden its applicability to a wider class of problems. Incorporating a diffusion term to extend the method to second-order Hamilton--Jacobi equations could enable applications in stochastic control, mean-field games, and related areas. Furthermore, incorporating adaptive step-size control, momentum, anchoring, Anderson acceleration, or quasi-Newton updates may further enhance convergence speed and robustness, particularly in very high-dimensional settings. Finally, combining the fixed-point approach with implicit neural network architectures \cite{bai2019deep,fung2022jfb} could open new possibilities for solving inverse problems and other complex high-dimensional tasks, thereby further extending the practical impact of the method.

\bibliographystyle{siamplain}
\bibliography{mybib}

\end{document}